\def\cal{\mathcal}
\newtheorem{defi}{Definition}[section]
\newtheorem{lem}[defi]{Lemma}
\newtheorem{theo}[defi]{Theorem}
\newtheorem{cor}[defi]{Corollary}
\newtheorem{pr}[defi]{Proposition}
\newtheorem{re}[defi]{Remark}
\font\tenmsy=msbm10
\def\Bbb#1{\hbox{\tenmsy#1}} 
\title[On the group of automorphisms of a quasi-affine variety]{On the group of automorphisms of a quasi-affine variety}
\author{Zbigniew Jelonek}
\address[Z. Jelonek]{Instytut Matematyczny\\
Polska Akademia Nauk\\
\'Sniadeckich 8, 00-956 Warszawa, Poland}
\email{najelone@cyf-kr.edu.pl}
\keywords{affine variety,  group of automorphisms } \subjclass{14
R 10}
\thanks{The author was partially supported by the grant of NCN,  2014-2017. }
\date{\today}
\begin{document}

\maketitle

\begin{abstract}
Let $\Bbb K$ be an algebraically closed field of characteristic
zero. We show that if the automorphisms group of a quasi-affine
variety $X$ is infinite, then $X$ is uniruled.
\end{abstract}

\bibliographystyle{alpha}
\maketitle

\section{Introduction.}

Automorphism groups of an open varieties have always attracted a
lot of attention, but the nature of this groups is still not
well-known. For example the group of automorphisms of $\Bbb K^n$
is understood only in the case $n=2$ (and $n=1$, of course).
 Let $Y$ be a an open variety. It is natural to ask
when the group of automorphisms of $Y$ is finite. We gave a
partial answer to this questions in our papers \cite{jel1},
\cite{jel2}, \cite{jel3} and \cite{jel4}. In \cite{ii2} Iitaka
proved that $Aut(Y)$ is finite if $Y$ has a maximal logarithmic
Kodaira dimension. Here we focus on the group of automorphisms of
an affine or more generally  quasi-affine variety over an
algebraically closed field of characteristic zero. Let us recall
that a quasi-affine variety is an open subvariety of some affine
variety. We prove the following:

\begin{theo}
Let $X$ be a quasi-affine (in particular affine)  variety over an
algebraically closed field of characteristic zero. If $Aut(X)$ is
infinite, then $X$ is uniruled, i.e., $X$ is covered by rational
curves.
\end{theo}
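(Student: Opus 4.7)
The approach is by contradiction. Suppose $\operatorname{Aut}(X)$ is infinite while $X$ is not uniruled. After replacing $X$ by its smooth locus (still quasi-affine, still with infinite automorphism group), I fix a smooth projective completion $\bar X \supset X$ with boundary $D = \bar X \setminus X = D_1 \cup \cdots \cup D_r$. Since uniruledness is a property of the general point, $\bar X$ is also non-uniruled. Quasi-affineness of $X$ guarantees that some positive combination $H = \sum a_i D_i$ is an ample $\Q$-divisor on $\bar X$. By Hanamura's structure theorem for non-uniruled smooth projective varieties, the identity component $\operatorname{Aut}^0(\bar X)$ is an abelian variety $A$ (the linear part of $\operatorname{Aut}^0$ is killed by non-uniruledness via the Chevalley--Rosenlicht decomposition), and $A$ coincides with the identity component of $\operatorname{Bir}(\bar X)$.

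The restriction map gives an injection $\operatorname{Aut}(X) \hookrightarrow \operatorname{Bir}(\bar X)$. Each $\phi \in \operatorname{Aut}(X)$ permutes the finite set $\{D_1, \dots, D_r\}$, yielding a homomorphism to the symmetric group $S_r$ whose kernel $K$ has finite index in $\operatorname{Aut}(X)$. For $\phi \in K$, the birational extension $\bar\phi$ to $\bar X$ restricts to an isomorphism on $X$ and sends each $D_i$ birationally to itself; hence $\bar\phi$ contracts no divisor, and the same applies to $\bar\phi^{-1}$, so $\bar\phi$ is a pseudo-automorphism that satisfies $\bar\phi^* H = H$. A pseudo-automorphism preserving an ample divisor class has empty exceptional locus (composing with the embedding defined by $|mH|$ yields a map given by a base-point-free complete linear system, hence a morphism, and a birational morphism without exceptional divisor between smooth projective varieties is biregular), so $\bar\phi$ is biregular. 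This embeds $K$ into the group $\operatorname{Aut}(\bar X, [H])$ of automorphisms preserving $[H]$ numerically. By the Fujiki--Lieberman theorem $\operatorname{Aut}(\bar X, [H])$ is an algebraic group of finite type with identity component $A$, so $K/(K \cap A)$ is finite; as $K$ is infinite, $K \cap A$ is infinite.

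The Zariski closure of $K \cap A$ in $A$ is a closed subgroup of positive dimension, and its identity component $B$ is a positive-dimensional abelian subvariety. The locus $\{g \in A : g(X) = X\}$ is closed in $A$ and contains the Zariski-dense subset $K \cap A$ of $B$, hence contains $B$; so $B$ acts biregularly on $\bar X$ preserving $X$. For any $x \in X$ the orbit $B \cdot x \subseteq X$ is complete and connected, but a quasi-affine variety contains no positive-dimensional complete subvarieties (coordinate functions on any affine embedding must be constant on each such subvariety). Hence $B \cdot x$ is a single point for every $x$; $B$ fixes $X$ pointwise, and by density acts trivially on $\bar X$, contradicting $\dim B > 0$.

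The main obstacle is the biregular extension in the second paragraph, where the quasi-affine hypothesis enters in a crucial and twofold way: first to produce the ample boundary class $[H]$, and then, via $K$-invariance of $H$, to upgrade pseudo-automorphisms to honest biregular automorphisms lying inside Fujiki--Lieberman's finite-type algebraic group. Once this is in place, everything reduces to the elementary incompatibility between positive-dimensional complete orbits and quasi-affine ambient varieties.
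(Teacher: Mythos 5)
Your overall strategy (stabilize a divisor class supported on the boundary, use non-uniruledness to kill the linear part of the relevant automorphism group, and derive a contradiction from the absence of positive-dimensional complete subvarieties in a quasi-affine variety) is close in spirit to the paper's, but two of your key steps fail as stated, and they are exactly the points where the real work lies.

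First, the claim that quasi-affineness of $X$ forces some positive combination $H=\sum a_iD_i$ of the boundary components of a smooth completion $\bar X$ to be \emph{ample} is false. Already for $X=X_1\setminus\{p\}$ with $X_1$ smooth affine and $p$ a point, any smooth completion with divisorial boundary contains an exceptional component $E$ over $p$, disjoint from the other boundary components, and $(aE+\sum b_jB_j)\cdot \ell=aE\cdot \ell<0$ for a curve $\ell\subset E$; so no positive combination is even nef. What survives (and what the paper proves, via principalization of the boundary ideal on a resolution) is only that the boundary is a \emph{big} hypersurface. With $H$ merely big, $|mH|$ is not base-point free, the associated map is only a birational embedding, and your upgrade from pseudo-automorphism to biregular automorphism --- and with it the appeal to Fujiki--Lieberman --- collapses. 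Second, and more seriously, the assertion that each $\bar\phi$ ``sends each $D_i$ birationally to itself; hence contracts no divisor'' is circular: the permutation action on $\{D_1,\dots,D_r\}$ is only defined on the components that $\bar\phi$ does \emph{not} contract, and nothing in your setup rules out contraction of boundary divisors (for $\mathbb{A}^2\subset\mathbb{P}^2$ the shear $(x,y)\mapsto(x,y+x^2)$ contracts the line at infinity; non-uniruledness of $\bar X$ does not forbid this on an arbitrary completion, since a contracted divisor need only be uniruled itself, like an exceptional divisor). This is precisely the step for which the paper invokes the quasi-minimal model: one passes to a model $Z$ on which $Bir(Z)=Aut_1(Z)$ (Prokhorov--Shramov, resting on the recent MMP results), transports the big boundary divisor there by a map that is an isomorphism in codimension one, and only then forms the stabilizer. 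Your endgame (Chevalley/Matsumura to reduce to an abelian variety, then completeness of orbits versus quasi-affineness) is a correct and pleasant variant of the paper's use of Rosenlicht's theorem, but it cannot be reached without repairing the two steps above.
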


This generalizes our old results from \cite{jel3} and \cite{jel4}.
Our proof uses in a significant way a recent progress in a minimal
model program ( see \cite{bir}, \cite{bir1}, \cite{p-s}) and is
based on our old ideas from \cite{jel1}, \cite{jel2}, \cite{jel3}
and \cite{jel4}.

\vspace{5mm} {\it Acknowledgements.} The author is grateful to
Professor Y. Prokhorov from Steklov Institute of Mathematics for
helpful discussions.

\section{Terminology.}

We assume that ground field $\Bbb K$ is algebraically closed of
characteristic zero.  For an algebraic variety $X$ (variety is
here always irreducible) we denote by $Aut(X)$ the group of all
regular automorphisms of $X$ and by $Bir(X)$ the group of all
birational transformations of $X$. By $Aut_{1}(X)$ we mean the
group of all birational transformation which are regular in
codimension one, i.e., which are regular isomorphisms outside
subsets of codimension at least two.   If $X\subset \Bbb P^n(\Bbb
K)$ then we put $Lin(X)=\{ f\in Aut(X): f = res_X T , T\in
Aut(\Bbb P^n(\Bbb K))\}.$ Of course, the group $Lin(X)$ is always
an affine group.

Let $f:X -\rightarrow Y$ be a rational mapping between projective
normal varieties. Then $f$ is determined outside some closed
subset $F$ of codimension at least two. If $S\subset X$ and
$S\not\subset F$ then by $f(S)$ we mean the set $f(X\setminus F).$
Similarly for $R\subset Y$ we will denote the set $\{ x\in
X\setminus F: f(x)\in R\}$ by $f^{-1}(R)$.

If $f:X -\to Y$ is a birational mapping and the mapping $f^{-1}$
does not contract any divisor, we say that $f$ is a birational
contraction.


An algebraic irreducible variety $X$ of dimension $n>0$ is called
uniruled if there exists an irreducible variety $W$ of dimension
$n-1$ and a rational dominant mapping $\phi : W\times \Bbb
P^1(\Bbb K) \  - \rightarrow X$. Equivalently: an algebraic
variety $X$ is uniruled if and only if for a generic point in X
there exists a rational curve in $X$ through this point.

 We say that divisor $D$
is \Bbb Q-Cartier if for some non-zero integer $m\in \Bbb Z$ the
divisor $mD$ is Cartier. If every divisor on $X$ is $\Bbb
Q$-Cartier then we say that $X$ is \Bbb Q-factorial.

In this paper  we treat a hypersurface $H=\bigcup^r_{i=1}
H_i\subset X$ as a reduced divisor $\sum^r_{i=1} H_i$, and
conversly a reduced divisor will be treated as a hyperserface.

\section{Weil divisors on a normal variety.}

In this section we recall  (with suitable modifications) some
basic results about divisors on a normal variety (see e.g.,
\cite{ii1}).

\begin{defi}
Let $X$ be a normal projective variety. We will  denote by
$Div(X)$ the group of all Weil divisors on $X$. For $D\in Div(X)$
the set of all Weil divisors linearly equivalent to $D$, is called
a complete linear system given by $D$ and it will be denoted by $|
D |$. Moreover we set  $L(D):=\{ f\in \Bbb K (X) : f=0 \ or \
D+(f)\geq 0 \}.$
\end{defi}

We have the following (e.g., \cite{ii1}, 2.16, p.126)

\begin{pr}
If $D$ is an effective divisor on a normal complete variety $X$
then $L(D)$ is a finite dimensional vector space (over $\Bbb K$).
\end{pr}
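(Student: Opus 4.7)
The plan is to prove finite-dimensionality by a nested induction: an outer induction on $\dim X$, and, for a fixed $X$, an inner induction on the total multiplicity of the effective divisor $D$. First one checks that $L(D)$ really is a $\Bbb K$-vector space: scalar multiplication is absorbed because $(\lambda f) = (f)$ for $\lambda \in \Bbb K^*$, and closure under addition follows from the basic inequality $v_F(f+g) \geq \min\{v_F(f), v_F(g)\}$ for every prime divisor $F$ of $X$. The base case of the inner induction is $D = 0$: then $L(0)$ consists of the rational functions with no poles, which is exactly the ring of global regular functions on $X$; by completeness and normality of the irreducible variety $X$, this ring is just $\Bbb K$.

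For the inductive step, write $D = \sum n_i E_i$ with the $E_i$ distinct prime divisors and $n_i \geq 1$, choose one component $E = E_1$ with multiplicity $n \geq 1$, and set $D' := D - E$. Then $D'$ is effective of strictly smaller total multiplicity, so $\dim L(D') < \infty$ by the inner induction hypothesis, and clearly $L(D') \subset L(D)$. By normality of $X$, the local ring $\mathcal{O}_{X,\eta}$ at the generic point $\eta$ of $E$ is a discrete valuation ring with some uniformizer $t$ and residue field $\Bbb K(E)$. Every nonzero $f \in L(D)$ satisfies $v_\eta(f) \geq -n$, so $t^{n}f$ is regular at $\eta$, and one obtains a well-defined $\Bbb K$-linear map
\[
\phi \colon L(D) \longrightarrow \Bbb K(E), \qquad \phi(f) := \overline{t^{n} f},
\]
whose kernel equals $L(D')$. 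It therefore suffices to bound $\dim \phi(L(D))$.

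The main obstacle is precisely this last step. The idea is to show that $\phi(L(D))$ is contained in $L(\Delta)$ for a suitable effective divisor $\Delta$ on the normalization $\widetilde{E}$ of $E$, which is itself a normal complete variety of dimension $\dim X - 1$. Intuitively, pulling $t^{n} f$ back to $\widetilde{E}$ gives a rational function whose poles are supported on the intersections of $E$ with the remaining components $E_i$ of $D$, with multiplicities controlled by the $n_i$; rigorously one removes a closed subset of $E$ of codimension two where the support of $D$ behaves badly, spreads out the local uniformizer $t$, and then pulls back through the normalization. Once this is done, the outer induction hypothesis delivers $\dim L(\Delta) < \infty$, so $L(D)/L(D')$ is finite-dimensional and the induction closes. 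A more economical alternative that sidesteps these geometric subtleties is to identify $L(D)$ with $H^0(X, \mathcal{O}_X(D))$ for the reflexive rank-one sheaf $\mathcal{O}_X(D)$, observe that this sheaf is coherent (being the pushforward of an honest line bundle from the smooth locus of $X$, whose complement has codimension $\geq 2$), and invoke the general finiteness of $H^0$ of a coherent sheaf on a complete variety.
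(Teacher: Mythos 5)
The paper does not actually prove this proposition --- it is quoted from Iitaka's book (2.16, p.~126) --- so there is no internal proof to compare against, and I assess your argument on its own terms. Your reductions are all correct: the vector-space structure of $L(D)$, the base case $L(0)=\Bbb K$ (a rational function with no poles on a normal variety is everywhere regular, and global regular functions on a complete irreducible variety are constant), and the identification $\ker\phi=L(D')$, since $f\in L(D)$ lies in $L(D-E)$ exactly when $v_\eta(t^nf)\ge 1$. In dimension one the induction already closes because $\Bbb K(E)=\Bbb K$. But the step you yourself flag as the main obstacle is, as written, not a proof, and it is where the entire content of the proposition lives in dimension $\ge 2$: you must exhibit one effective divisor $\Delta$ on $\widetilde E$, independent of $f$, with $\phi(L(D))\subset L(\Delta)$. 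After shrinking to an open $U$ meeting $E$ on which $t$ generates the ideal of $E$ and which avoids $\mathrm{Sing}(X)$ and the components $E_i$, $i\ge 2$, one does see that $t^nf$ is regular on $U$ for every $f\in L(D)$, so $\overline{t^nf}$ has poles only along the finitely many prime divisors of $\widetilde E$ lying over $E\setminus U$; what is missing is a bound on the \emph{order} of these poles uniform in $f$. That bound is not automatic from ``multiplicities controlled by the $n_i$'': at a codimension-two point $x$ of $X$ one needs, for instance, a single $h\in\mathcal O_{X,x}$ lying in the symbolic powers $\mathfrak q_i^{(n_i)}$ of the primes of all the $E_i$ through $x$, so that $hf\in\mathcal O_{X,x}$ for \emph{every} $f\in L(D)$ simultaneously (this uses normality), followed by an estimate of the valuation of $t^n/h$ along the divisor of $E$ in question. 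Until this uniform estimate is written down, the inductive step is only a plan, not an argument.

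Your ``more economical alternative'' is, by contrast, essentially the standard complete proof and I would adopt it outright: $L(D)=H^0(X,\mathcal O_X(D))$, where $\mathcal O_X(D)\subset\Bbb K(X)$ is the subsheaf cut out by the conditions $v_F\ge-\mathrm{mult}_F(D)$; locally $\mathcal O_X(D)$ embeds into $\frac1h\mathcal O_X$ for a suitable $h$ and is identified with an ideal-type sheaf given by finitely many valuation conditions, hence is coherent (normality, i.e.\ the property that functions regular in codimension one are regular, is what makes this identification correct), and $H^0$ of a coherent sheaf on a complete variety is finite dimensional. One caveat on your phrasing: the coherence of $j_*$ of a line bundle on the smooth locus is \emph{not} a formal property of pushforward along open immersions (it fails without the codimension-$\ge 2$ and normality hypotheses), and justifying it is precisely the same uniform-boundedness issue as above; so state it as the theorem it is rather than as an observation.
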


\begin{re}
{\rm The set $|D |$ (if non-empty) has a natural structure of a
projective space of dimension dim  $L(D) - 1.$ By a basis of $|D|$
we mean any subset $\{D_0,...,D_n\}\subset |D|$ such that
$D_i=D+(\phi_i)$ and $\{ \phi_0,...,\phi_n\}$ is a basis of
$L(D).$}
\end{re}

Let us recall the next

\begin{defi}
If $D$ is an effective Weil divisor on a normal complete variety
$X$ then by a canonical mapping given by $|D|$ and a basis $\phi$,
we mean the mapping $i_{(D,\phi)} =(\phi_0:...:\phi_n) :
X\rightarrow \Bbb P^n(\Bbb K)$,   where
$\phi=\{\phi_0,...,\phi_n\}\subset L(D)$ is a basis of $L(D).$
\end{defi}

Let $X$ be a normal variety and $Z$ be a closed subvariety of $X$.
Put $X'=X\setminus Z.$ We would like to compare the group $Div(X)$
and $Div(X').$ It can be easily checked that the following
proposition is true (compare \cite{har1}, 6.5., p. 133):

\begin{pr}
Let $j_{X'}: Div(X)\ni \sum_{i=1}^r n_iD_i\rightarrow \sum_{i=1}^r
n_i (D_i\cap X')\in Div(X').$ Then $j_{X'}$ is an epimorphism.
Moreover it preserves the linear equivalence. If additionally
$codim \ Z\geq 2$ then $j_{X'}$ is an isomorphism.
\end{pr}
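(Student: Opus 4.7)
The plan is to verify each of the three claims (surjectivity, preservation of linear equivalence, injectivity under the codimension hypothesis) by working prime-divisor by prime-divisor, and exploiting the identification of function fields $\Bbb K(X) = \Bbb K(X')$ that holds because $X'$ is open and dense in $X$.

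For well-definedness and surjectivity, first note that if $D$ is a prime divisor on $X$ then either $D \subset Z$, in which case $D \cap X'$ is empty and we interpret $j_{X'}(D) = 0$, or else $D \not\subset Z$, in which case $D \cap X' = D \setminus (D \cap Z)$ is a non-empty open subset of the irreducible codimension-one set $D$, hence an irreducible closed codimension-one subset of $X'$, i.e.\ a prime divisor on $X'$. Extending $\Bbb Z$-linearly gives the homomorphism $j_{X'}$. To see surjectivity, take a prime divisor $D'$ on $X'$ and let $D = \overline{D'}$ denote its closure in $X$; since $X$ is Noetherian and $D'$ is irreducible of codimension one in $X'$, the closure $D$ is irreducible of codimension one in $X$, hence a prime divisor on $X$, and by construction $D \cap X' = D'$. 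This shows every prime divisor of $X'$ is hit, so the $\Bbb Z$-linear extension $j_{X'}$ is surjective.

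For preservation of linear equivalence, use that $X'$ is a non-empty open subset of the irreducible $X$, so $\Bbb K(X') = \Bbb K(X)$. If $f \in \Bbb K(X)^*$ and $(f)_X = \sum v_{D_i}(f)\,D_i$ is its principal divisor on $X$, then for any prime divisor $D_i$ not contained in $Z$, the local ring $\mathcal{O}_{X,D_i}$ equals $\mathcal{O}_{X', D_i \cap X'}$, so the valuation $v_{D_i}$ computed on $X$ agrees with the valuation $v_{D_i \cap X'}$ on $X'$; prime divisors contained in $Z$ simply drop out under $j_{X'}$. Hence $j_{X'}((f)_X) = (f)_{X'}$, and therefore $j_{X'}$ sends principal divisors to principal divisors and preserves linear equivalence.

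Finally, assume $\mathrm{codim}_X\, Z \geq 2$. Then no prime divisor of $X$ can be contained in $Z$, since every prime divisor has codimension exactly one. Consequently $j_{X'}$ sends distinct prime divisors of $X$ to distinct prime divisors of $X'$ (the map $D \mapsto D \cap X'$ is inverse to taking closure). If $\sum n_i D_i \in \ker j_{X'}$ with the $D_i$ distinct prime divisors, then $\sum n_i (D_i \cap X') = 0$ forces each $n_i = 0$, so $j_{X'}$ is injective, and together with the surjectivity already established it is an isomorphism. The only subtle point, and the one to handle carefully, is the matching of valuations on $X$ and on $X'$ for prime divisors meeting $X'$; everything else is a direct bookkeeping of prime components.
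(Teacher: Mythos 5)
Your proof is correct and is exactly the standard argument the paper has in mind: the paper offers no proof of its own, stating only that the proposition ``can be easily checked'' and citing Hartshorne, Prop.\ II.6.5, whose proof is precisely your prime-divisor-by-prime-divisor bookkeeping (restriction versus closure, equality of the local rings $\mathcal{O}_{X,D}=\mathcal{O}_{X',D\cap X'}$ for the valuations, and the vanishing of components inside $Z$). Nothing is missing.
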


Now we  define the pull-back of a  divisor under a rational map
$f:X -\to Y.$ Recall that  a Cartier divisor can be given by a
system $\{ U_\alpha, \phi_\alpha\}$ where $\{U_\alpha\}$ is a some
open covering of $X$, $\phi_\alpha\in \Bbb K (U_\alpha)$ and
$\phi_\alpha/\phi_\beta\in {\cal O}^*(U_\alpha\cap U_\beta).$

\begin{defi}
Let $f:X\rightarrow Y$ be a  dominant morphism between normal
varieties. Let $D$ be a Cartier divisor on $Y$ given by a system
$\{ U_\alpha, \phi_\alpha\}$.  By the pullback of the divisor $D$
by $f$ we mean the divisor $f^*D$ given by a system $\{
f^{-1}(U_\alpha), \phi_\alpha\circ f\}.$ More generally if $f$ is
a rational  map and $X_f$ denotes the domain of $f$ we put
$f^*(D):={j_{X_f}}^{-1}(res_{X_f} f)^*D.$

Finally let $f$ be as above and $D$ be an arbitrary Weil divisor
on $Y$. Let us assume additionally that  {\rm codim}
$f^{-1}(Sing(Y))\ge 2.$  Then we have a regular map $f:
X_f\setminus W\rightarrow Y_{reg}$ (where $W:=f^{-1}(Sing(Y))$ and
we put $f^*D:={j_{X_f\setminus W}}^{-1}f^*({j_{Y_{reg}}}(D)).$
\end{defi}

By a simple verification we have

\begin{pr}
Let $f:X -\rightarrow Y$ be a dominant rational mapping between
complete normal varieties, such that $f^{-1}(Sing(Y))$ has
codimension at least two. Then $f^*:Div(Y)\ni D \rightarrow
f^*D\in Div(X)$ is a well-defined homomorphism, which preserves
the linear equivalence. Moreover,
$Supp(f^*(D))=\overline{f^{-1}(Supp(D))}.$$\square$
\end{pr}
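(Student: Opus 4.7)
The plan is to unwind the layered Definition 3.6 and reduce everything to the base case of a morphism between normal varieties applied to a Cartier divisor. I would argue in four stages, matching the successive generalizations in that definition.

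First, I would treat the base case: $f : X \to Y$ a dominant morphism of normal varieties and $D$ a Cartier divisor on $Y$ given by $\{U_\alpha, \phi_\alpha\}$. Because $f$ is dominant, each $\phi_\alpha \circ f$ is a non-zero rational function on $f^{-1}(U_\alpha)$, and on overlaps $(\phi_\alpha \circ f)/(\phi_\beta \circ f) = (\phi_\alpha/\phi_\beta) \circ f \in {\cal O}^*(f^{-1}(U_\alpha \cap U_\beta))$, so $f^*D$ is a well-defined Cartier divisor. Additivity in $D$ is immediate from the multiplicativity of the local equations. If $D = (g)$ is principal with $g \in \Bbb K(Y)^*$, dominance gives $g \circ f \in \Bbb K(X)^*$ and $f^*D = (g \circ f)$, so linear equivalence is preserved.

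Second, I would pass to a dominant rational $f$ and Cartier $D$. The domain $X_f$ has complement of codimension $\geq 2$, so step one applies to $\mathrm{res}_{X_f} f : X_f \to Y$, and Proposition 3.5 supplies an isomorphism $j_{X_f}^{-1}$ lifting the result to $\mathrm{Div}(X)$. Since $j_{X_f}$ is a group homomorphism preserving linear equivalence, the homomorphism and linear-equivalence properties transfer. Third, for a general Weil divisor $D$ on $Y$, the hypothesis that $f^{-1}(\mathrm{Sing}(Y))$ has codimension $\geq 2$ in $X$ ensures that $X_f \setminus W$ is open with codimension $\geq 2$ complement. On the smooth $Y_{\mathrm{reg}}$ the restriction $j_{Y_{\mathrm{reg}}}(D)$ is Cartier (smooth implies locally factorial), so step two applies to $f : X_f \setminus W \to Y_{\mathrm{reg}}$, and $j_{X_f \setminus W}^{-1}$ again transports the resulting divisor to $X$ while preserving all relevant structure.

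Finally, for the support claim, let $U \subset X$ be the open set on which $f$ is regular and $f(U) \subset Y_{\mathrm{reg}}$; by construction $X \setminus U$ has codimension $\geq 2$. On $U$ the pullback $f^*(j_{Y_{\mathrm{reg}}}(D))$ is given locally by $\phi_\alpha \circ f$, whose zero/pole locus is exactly $f^{-1}(\mathrm{Supp}(D)) \cap U$, so $\mathrm{Supp}(f^*D) \cap U = f^{-1}(\mathrm{Supp}(D)) \cap U$. Taking Zariski closure in $X$ gives the claimed equality, using that $X \setminus U$ cannot contain any divisorial component of $\mathrm{Supp}(f^*D)$. The main obstacle, modest as it is, lies precisely in this last bookkeeping: one has to verify that no irreducible divisor of $\mathrm{Supp}(f^*D)$ is hidden inside the codimension-two locus $X \setminus U$, which is exactly where Proposition 3.5 and the codimension hypothesis on $f^{-1}(\mathrm{Sing}(Y))$ do their work.
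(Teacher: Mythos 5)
Your proof is correct and is essentially the argument the paper has in mind: the paper offers no written proof at all (the proposition is introduced with ``By a simple verification we have'' and closed with a $\square$), and your four-stage unwinding of Definition 3.6 --- morphism/Cartier base case, transport along $j_{X_f}$ and $j_{X_f\setminus W}$ via Proposition 3.5, reduction of Weil divisors to Cartier divisors on $Y_{\mathrm{reg}}$, and the closure bookkeeping for the support --- is precisely that verification spelled out. No discrepancy to report.
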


\begin{cor}\label{izo}
Let $f$ be as above. Let us assume additionally that $f$ is an
isomorphisms in codimension one. Then $f^*: Div(Y)\rightarrow
Div(X)$ is an isomorphisms which preserves the linear equivalence.
$\square$
\end{cor}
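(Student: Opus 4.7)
The plan is to construct an explicit inverse to $f^*$ by restricting to an open set on which $f$ is a biregular isomorphism and then extending back across codimension-two loci.

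Since $f$ is an isomorphism in codimension one, there exist closed subsets $Z_X\subset X$ and $Z_Y\subset Y$, both of codimension at least two, such that $f$ restricts to a regular isomorphism $g:=f|_{X\setminus Z_X} : X\setminus Z_X \longrightarrow Y\setminus Z_Y$. Enlarging $Z_X$ by $f^{-1}(\mathrm{Sing}(Y))$ (which by hypothesis has codimension $\geq 2$) and $Z_Y$ by $\mathrm{Sing}(Y)$, we may moreover assume $X\setminus Z_X$ and $Y\setminus Z_Y$ are smooth, so Cartier and Weil divisors coincide there and the pullback $g^*$ is a genuine isomorphism of divisor groups respecting linear equivalence.

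The first step is to invoke Proposition~3.5: because $Z_X$ and $Z_Y$ have codimension $\geq 2$, the restriction maps
$$j_{X\setminus Z_X}: Div(X) \longrightarrow Div(X\setminus Z_X), \qquad j_{Y\setminus Z_Y}: Div(Y) \longrightarrow Div(Y\setminus Z_Y)$$
are isomorphisms preserving linear equivalence. Next, I would check that the square
$$\begin{array}{ccc}
Div(Y) & \xrightarrow{\ f^*\ } & Div(X) \\
j_{Y\setminus Z_Y}\,\downarrow & & \downarrow\, j_{X\setminus Z_X} \\
Div(Y\setminus Z_Y) & \xrightarrow{\ g^*\ } & Div(X\setminus Z_X)
\end{array}$$
commutes. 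This is essentially a tautology from Definition~3.6: $f^*D$ is by construction $j_{X_f}^{-1}$ applied to the honest pullback on the regular domain, and since $X\setminus Z_X$ sits inside that regular domain, restricting $f^*D$ to $X\setminus Z_X$ yields exactly $g^*(D|_{Y\setminus Z_Y})$.

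Once the diagram is in place, the conclusion is immediate: three of its four maps ($j_{Y\setminus Z_Y}$, $j_{X\setminus Z_X}$, and $g^*$) are isomorphisms preserving linear equivalence, hence so is $f^*$, with explicit inverse
$$(f^*)^{-1} = j_{Y\setminus Z_Y}^{-1}\circ (g^*)^{-1}\circ j_{X\setminus Z_X}.$$
I do not anticipate a serious obstacle here; the only point demanding care is verifying commutativity of the square, which boils down to unwinding Definition~3.6 and checking that the codimension-two loci thrown away in the definition of $f^*$ can be absorbed into $Z_X$ without changing anything, so that the two ways of going around the square agree on a common open set of codimension-two complement and therefore agree as Weil divisors.
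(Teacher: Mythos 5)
Your argument is correct and is exactly the verification the paper leaves implicit (the corollary is stated with no proof as an immediate consequence of Proposition 3.7): restrict to the codimension-two-complement open sets where $f$ is biregular, use Proposition 3.5 to identify the divisor groups, and transport the isomorphism back. The only point worth noting is that the same conclusion also follows by applying Proposition 3.7 directly to both $f$ and $f^{-1}$ and observing that the resulting homomorphisms are mutually inverse, which is the same bookkeeping in a different order.
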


Finally we have the following important result:

\begin{pr}
Let $X$ be a normal complete variety and $f\in Aut_{1}(X)$. Let
$D$ be an effective divisor on $X$ and  $f^*D'=D$. Then ${\rm dim}
|D|= {\rm dim} \  |D'|:=n$ and there exists a unique automorphism
$T(f)\in Aut(\Bbb P^n(\Bbb K))$ such that the folowing diagram
commutes

\begin{center}
\begin{picture}(240,160)(-40,40)
\put(5,180){\makebox(0,0)[tl]{$X$}}
\put(178,180){\makebox(0,0)[tl]{$X$}}
\put(0,40){\makebox(0,0)[tl]{$\Bbb P^n (\Bbb K)$}}
\put(170,40){\makebox(0,0)[tl]{$\Bbb P^n (\Bbb K)$}}
\put(80,50){\makebox(0,0)[tl]{$T(f)$}}
\put(85,190){\makebox(0,0)[tl]{$f $}}
\put(15,110){\makebox(0,0)[tl]{$i_{D}$}}
\put(185,110){\makebox(0,0)[tl]{$i_{D'}$}}
\multiput(25,175)(8,0){17}{\line(1,0){5}}
\put(157,175){\vector(1,0){10}} \put(35,35){\vector(1,0){130}}
\multiput(180,165)(0,-8){14}{\line(0,-1){5}}
\put(180,53){\vector(0,-1){10}}
\multiput(10,165)(0,-8){14}{\line(0,-1){5}}
\put(10,53){\vector(0,-1){10}}
\end{picture}
\end{center}

\vspace{3mm}

\end{pr}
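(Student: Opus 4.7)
The plan is to reduce the statement to the construction of an isomorphism of complete linear systems $L(D) \cong L(D')$ induced by pullback through $f$, and then to take $T(f)$ to be the projective linearization of this vector space isomorphism.

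First, since $f \in Aut_1(X)$, Corollary \ref{izo} gives that $f^* : Div(X) \to Div(X)$ is an isomorphism preserving linear equivalence, with inverse $(f^{-1})^*$. Combined with the identity $f^*(D' + (\phi')) = D + (\phi' \circ f)$, valid for any $\phi' \in \mathbb{K}(X)^*$ on the domain of regularity of $f$, this shows that the composition map $F : \mathbb{K}(X) \to \mathbb{K}(X)$, $\phi' \mapsto \phi' \circ f$, restricts to a $\mathbb{K}$-linear map $F : L(D') \to L(D)$. The symmetric construction with $f^{-1}$ in place of $f$ produces the two-sided inverse of $F$, so $F$ is a linear isomorphism. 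In particular $\dim L(D) = \dim L(D')$, whence $\dim |D| = \dim |D'| =: n$, proving the first assertion.

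Next, fix bases $\phi = \{\phi_0,\ldots,\phi_n\}$ of $L(D)$ and $\phi' = \{\phi_0',\ldots,\phi_n'\}$ of $L(D')$, so that $i_D = i_{(D,\phi)}$ and $i_{D'} = i_{(D',\phi')}$. Since $F(\phi_i') \in L(D)$, there are unique scalars $a_{ij}$ with $F(\phi_i') = \sum_j a_{ij}\phi_j$; the matrix $A = (a_{ij})$ is invertible because $F$ maps a basis to a basis. Let $T(f) \in Aut(\mathbb{P}^n(\mathbb{K}))$ be the projective transformation induced by $A$. On the open subset $U \subset X$ on which $f$ is regular and none of the canonical maps is indeterminate, I would check directly that
$$i_{D'}(f(x)) = \bigl(\phi_0'(f(x)) : \cdots : \phi_n'(f(x))\bigr) = \Bigl(\sum_j a_{0j}\phi_j(x) : \cdots : \sum_j a_{nj}\phi_j(x)\Bigr) = T(f)(i_D(x)),$$
which is the required commutativity. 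Uniqueness of $T(f)$ is then immediate: the $\phi_j$ are linearly independent in $\mathbb{K}(X)$, so $i_D(U)$ is not contained in any hyperplane of $\mathbb{P}^n(\mathbb{K})$, and a projective automorphism is determined by its restriction to such a set.

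The only non-trivial point to verify carefully is the bookkeeping identity $f^*(\phi') = (\phi' \circ f)$ as principal Weil divisors on $X$, which is what legitimizes passing from an equation of divisors $f^*(D' + (\phi')) = D + (\phi' \circ f)$ to the conclusion $\phi' \circ f \in L(D)$; this follows from the local Cartier description of pullback in the definition above, combined with $f \in Aut_1(X)$ so that codimension-one information is preserved. Once this is in hand, the rest is linear algebra.
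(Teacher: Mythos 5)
Your proposal is correct and follows essentially the same route as the paper: both use Corollary \ref{izo} together with the identity $f^*(D'+(\psi))=D+(\psi\circ f)$ to show that composition with $f$ carries a basis of $L(D')$ to a basis of $L(D)$, and then define $T(f)$ by the resulting invertible matrix. Your write-up merely supplies more detail on points the paper leaves implicit (the two-sided inverse via $f^{-1}$, the non-degeneracy of $i_D$ for uniqueness), so there is nothing substantively different to compare.
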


\begin{proof} First of all let us note that $T(f)$ if exists it is unique.
Further, by Corollary \ref{izo} we have $f^*(|D'|)= |D|$ and $f^*$
transforms any basis of $|D'|$ onto a basis of $|D|.$ Let $\phi$
and $\psi$ be suitable bases such that $i_D=i_{(D,\phi)}$ and
$i_{D'}=i_{(D',\psi)}.$

We have $i_{D'}\circ f = (\psi_0,...,\psi_n)\circ f.$ But
$f^*(D'+(\psi_i))=f^*(D')+f^*(\psi_i)=D+(\psi_i\circ f).$ It means
that rational functions $(\psi_i\circ f), i=0,...,n$ are the basis
of $L(D).$ Hence there exists a non-singular matrix ${[a_{i,j}]}$
such that $\psi_i\circ f= \sum_{j=0}^n a_{i,j} \phi_j.$ Now it is
clear that it is enough to take as $T(f)$ the projective
automorphism of $\Bbb P^n(\Bbb K)$ given by the matrix
$[a_{i,j}].$ \end{proof}

\begin{cor}\label{natural}
Let $G$ be a  subgroup of $Aut_1(X)$ such that $G^*D =D$ for some
effective divisor $D.$ Let us denote $\overline{i_D(X)}=X'\subset
\Bbb P^n(\Bbb K),$ $n={\rm dim} \ |D|.$ Then there is a natural
homomorphism $T:G\rightarrow Lin(X')$. Moreover, if $D$ is very
big (i.e., the mapping $i_D$ is a birational embedding) , then $T$
is a monomorphism.
\end{cor}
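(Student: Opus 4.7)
The plan is to feed each $f \in G$ through the preceding Proposition. Since $f \in Aut_1(X)$ and $f^*D = D$, that Proposition (applied with $D' = D$) produces a unique $T(f) \in Aut(\mathbb{P}^n(\mathbb{K}))$ making the diagram commute, so that $T(f) \circ i_D = i_D \circ f$ as rational maps $X \dashrightarrow \mathbb{P}^n(\mathbb{K})$. The homomorphism property $T(fg) = T(f) T(g)$ is then immediate from the uniqueness clause: both sides satisfy $T \circ i_D = i_D \circ (f\circ g)$, hence agree.

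Next I would check that $T(f)$ restricts to an element of $Lin(X')$. On the (dense) locus where $f$ and $i_D$ are regular, the identity $T(f)(i_D(x)) = i_D(f(x))$ shows that $T(f)$ sends a dense subset of $X' = \overline{i_D(X)}$ into $i_D(X) \subseteq X'$; since $T(f)$ is a regular morphism of $\mathbb{P}^n(\mathbb{K})$, it follows by continuity that $T(f)(X') \subseteq X'$. Running the same argument for $f^{-1} \in G$ gives $T(f^{-1})(X') \subseteq X'$, and combined with the homomorphism property $T(f) T(f^{-1}) = T(\mathrm{id}) = \mathrm{id}$ this forces $T(f)|_{X'}$ to be an automorphism of $X'$ induced by an element of $Aut(\mathbb{P}^n(\mathbb{K}))$, i.e., an element of $Lin(X')$. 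So $T\colon G \to Lin(X')$ is a well-defined group homomorphism.

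Finally, assume $D$ is very big, so $i_D\colon X \dashrightarrow X'$ is birational; let $i_D^{-1}$ denote its rational inverse. If $T(f) = \mathrm{id}$, then the relation $i_D \circ f = T(f) \circ i_D = i_D$ holds as rational maps, and composing on the left with $i_D^{-1}$ yields $f = \mathrm{id}$ as a rational selfmap of $X$; since $f \in Aut_1(X)$ is determined by its behaviour on a dense open subset, this means $f$ is the identity of $G$, giving injectivity.

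The only step requiring any care is showing that $T(f)$ preserves $X'$ and restricts to an automorphism there: one has to pass from a set-theoretic relation on a dense open subset to a statement about the closed subvariety $X'$, and then combine this with the image of $f^{-1}$ to upgrade ``sends $X'$ into $X'$'' to ``is an automorphism of $X'$''. Everything else is formal bookkeeping built on the preceding Proposition.
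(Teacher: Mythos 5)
Your proposal is correct and follows essentially the same route as the paper, which simply applies the preceding Proposition with $D'=D$ and $\phi=\psi$ and declares the injectivity statement obvious. You have merely filled in the details the paper leaves implicit (the homomorphism property via uniqueness of $T(f)$, the fact that $T(f)$ restricts to an automorphism of $X'$, and the cancellation of $i_D$ when it is a birational embedding), all of which check out.
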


\begin{proof}
It is enough to take above $D'=D$ and $\phi=\psi.$ The last
statement is obvious.
\end{proof}

\begin{re}
{\rm In our application we deal only with normal $\Bbb
Q-$factorial varieties. Hence we can restrict here only to $\Bbb
Q-$Cartier divisors. However the author thinks that the language
of Weil divisors is more natural here.}
\end{re}

\section{Varieties with good covers.}
We begin this section by recalling  the definition of a big
divisor ( see \cite{k-m}, p. 67):

\begin{defi}
Let $X$ be a projective $n$-dimensional variety and  $D$ a Cartier
divisor on $X.$ The divisor  $D$ is called big if {\rm dim}
$H^0(X, {\cal O}_X(kD))> ck^n$ for some $c>0$ and $k>>1.$
\end{defi}

If $f:X\to Y$ is a birational morphisms and $D$ is a big (Cartier)
divisor, then also its pullback $f^*(D)$ is big. Indeed the line
bundle ${\cal O}_X(mf^*(D))=f^*{\cal O}_Y(mD)$ has at least as
many sections as the bundle ${\cal O}_Y(mD)$. We show later that
it is also true for suitable birational mappings (see Lemma
\ref{big1}). We have a following characterization of big divisors
( see \cite{k-m}, Lemma 2.60, p. 67):

\begin{pr}\label{big}
Let $X$ be a projective $n$-dimensional variety and  $D$ a Cartier
divisor on $X.$ Then the following are equivalent:

1) $D$ is big,

2) $mD\sim A+E$, where $A$ is ample and $E$ is effective Cartier
divisor,

3) for  $m>>0$ the rational map $\iota_{mD}$ associated with the
system $|mD|$ is a birational embedding,

4) the image of $\iota_{mD}$ has dimension $n$ for $m>>0.$
\end{pr}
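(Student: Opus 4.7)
The plan is to run the cycle of implications $(2)\Rightarrow(1)\Rightarrow(4)\Rightarrow(3)\Rightarrow(2)$; the implication $(3)\Rightarrow(4)$ is immediate from the definitions and serves as a consistency check. The direction $(2)\Rightarrow(1)$ is the easiest and purely cohomological: if $mD\sim A+E$ with $A$ ample and $E\geq 0$, then multiplication by $s_E^k$, where $s_E\in H^0(X,\mathcal{O}_X(E))$ is a canonical section vanishing along $E$, gives an injection
\[
H^0(X,\mathcal{O}_X(kA))\hookrightarrow H^0(X,\mathcal{O}_X(kA+kE))=H^0(X,\mathcal{O}_X(kmD)).
\]
Asymptotic Riemann--Roch for the ample divisor $A$ yields $h^0(X,\mathcal{O}_X(kA))\geq \frac{A^n}{2\cdot n!}\,k^n$ for $k\gg 0$, and this bound transports through the injection to give bigness of $D$.

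For $(1)\Rightarrow(4)$, suppose that $\dim \iota_{mD}(X)=d<n$ for every large $m$. Then, up to the fixed part of $|mD|$, the sections of $\mathcal{O}_X(mD)$ pull back from rational functions on a $d$-dimensional projective variety, and the Iitaka-dimension estimate forces $h^0(X,\mathcal{O}_X(mD))=O(m^d)$, contradicting the bound $h^0>c\,m^n$. For $(4)\Rightarrow(3)$, resolve the base locus of $|mD|$ via $\pi\colon\tilde X\to X$ and Stein-factor the induced morphism $\tilde X\to \mathbb{P}^{N_m}$ as $g\circ f$ with $f$ having connected fibres and $g$ finite onto its $n$-dimensional image. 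Passing to a sufficiently large multiple $\ell m$ produces enough sections of $\mathcal{O}_X(\ell mD)$ to separate the generic fibres of $g$, forcing $g$ to become birational and hence $\iota_{\ell mD}$ a birational embedding.

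The main obstacle is the implication $(3)\Rightarrow(2)$, which is the classical Kodaira lemma. Assuming $\iota_{mD}$ is a birational embedding, resolve indeterminacy via $\pi\colon\tilde X\to X$ and write $\pi^*(mD)=M+F$ with $M$ base-point-free and $F\geq 0$; since $|M|$ defines a birational morphism to projective space, $M$ is big and nef on $\tilde X$. The key step is then Kodaira's lemma on $\tilde X$: for some $\ell\geq 1$ one has $\ell M\sim A'+E'$ with $A'$ ample and $E'\geq 0$. Pushing forward by $\pi$ and absorbing the birational-exceptional divisors into the effective part gives $\ell mD\sim \pi_*A'+\pi_*E'+(\text{effective})$; a final adjustment---replacing the ample candidate $\pi_*A'$ on $X$ by the pullback under $\iota_{mD}$ of a general hyperplane section of a projective embedding of the image---produces a genuinely ample $A$ on $X$ with $m'D\sim A+E$ for some effective $E$. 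The hard point is exactly this last step, which requires care to preserve ampleness under the birational modifications rather than merely nefness or bigness.
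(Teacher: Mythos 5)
First, a point of comparison: the paper gives no proof of this proposition at all --- it is quoted from Koll\'ar--Mori, Lemma 2.60 --- so your attempt has to stand on its own.

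Your links $(2)\Rightarrow(1)$ and $(1)\Rightarrow(4)$ are fine (section multiplication plus asymptotic Riemann--Roch, and the Iitaka bound $h^0(mD)=O(m^{\kappa(D)})$, respectively). The other two links are genuinely broken. For $(4)\Rightarrow(3)$ you Stein-factor $\iota_{mD}$ into a connected-fibre part followed by a finite map $g$ and then assert that ``passing to a sufficiently large multiple $\ell m$ produces enough sections to separate the generic fibres of $g$.'' That assertion is the entire content of the implication and you give no mechanism for it: if every section of $\mathcal{O}_X(\ell mD)$ were a polynomial in sections of $\mathcal{O}_X(mD)$ times the fixed part, then $\iota_{\ell mD}$ would factor through $\iota_{mD}$ and the degree of $g$ would never drop. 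The fibre-separating sections are normally manufactured from a decomposition $m'D\sim A+E$, i.e.\ from (2), which in your cycle only becomes available \emph{after} (3). For $(3)\Rightarrow(2)$ you apply Kodaira's lemma on a resolution $\tilde X$ and then try to descend: but the pushforward of an ample divisor under a birational morphism is big, not ample, and your ``final adjustment'' --- pulling back a general hyperplane of the image under the birational embedding $\iota_{mD}$ --- again yields only a big divisor, since ampleness of that pullback would require $\iota_{mD}$ to be an isomorphism onto its image rather than merely birational. So the cycle does not close.

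The standard repair is to reorder the cycle as $(2)\Rightarrow(3)\Rightarrow(4)\Rightarrow(1)\Rightarrow(2)$: for $(2)\Rightarrow(3)$, the subsystem $|kA|+kE\subseteq|kmD|$ already embeds $X\setminus\mathrm{Supp}(E)$ once $kA$ is very ample; $(3)\Rightarrow(4)$ is trivial; $(4)\Rightarrow(1)$ follows by pulling back the roughly $\frac{\deg V}{n!}k^n$ sections of $\mathcal{O}_V(k)$ from the $n$-dimensional image $V$; and $(1)\Rightarrow(2)$ is Kodaira's lemma carried out on $X$ itself, using a very ample effective divisor $A$ and the sequence $0\to\mathcal{O}_X(mD-A)\to\mathcal{O}_X(mD)\to\mathcal{O}_A(mD)\to 0$ together with $h^0(\mathcal{O}_A(mD))=O(m^{n-1})$ to force $h^0(mD-A)>0$ for suitable $m$.
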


In the sequel we need the following observation:

\begin{lem}\label{big}
Let $X$ be a smooth projective variety and let $D=\sum^r_{i=1} a_i
D_i$ be a big divisor on $X$. Then $Supp(D)=\sum^r_{i=1} D_i$ is
also a big divisor on $X.$
\end{lem}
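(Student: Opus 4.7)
The plan is to control the reduced divisor $D':=\sum_{i=1}^r D_i$ from below by the given big divisor $D=\sum a_iD_i$, up to an effective error, and then invoke the standard fact that bigness is preserved under adding effective divisors.

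More concretely, first I would pick a positive integer $N$ with $N\ge a_i$ for every $i$ (for example $N=\max_i a_i$, or just any sufficiently large integer if some $a_i$ happen to be non-positive). Then
\[
N\,D'-D \;=\; \sum_{i=1}^r (N-a_i)\,D_i
\]
is an effective divisor, since each coefficient $N-a_i$ is non-negative. Because $X$ is smooth, every Weil divisor is Cartier, so I may freely regard $D$, $D'$ and $ND'-D$ as Cartier divisors. Thus
\[
N\,D' \;=\; D \;+\; E, \qquad E:=N\,D'-D\ \text{effective}.
\]

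Next I would show that a big divisor plus an effective divisor is still big; this is the one real step and is immediate from characterization (2) in Proposition \ref{big}. Indeed, since $D$ is big, some multiple satisfies $mD\sim A+E'$ with $A$ ample and $E'$ effective, and then
\[
m(D+E)\;\sim\; A + (E'+mE),
\]
where $A$ is ample and $E'+mE$ is effective, so $D+E$ is big by the same criterion. Applying this with $D+E=ND'$ shows $ND'$ is big, and hence $D'$ itself is big (since $H^0(X,\mathcal O_X(kND'))$ already grows like $k^n$, the subsequence $h^0(X,\mathcal O_X(mD'))$ with $m$ a multiple of $N$ is enough to conclude via characterization (4) of Proposition \ref{big}).

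I do not anticipate any real obstacle: the only thing to be slightly careful about is that $D$ itself need not be effective, so the choice of $N$ must be made with respect to $\max_i a_i$ rather than with respect to $1$, but this is cosmetic. The smoothness hypothesis is used only to ensure every Weil divisor in sight is Cartier, so that Proposition \ref{big} applies directly to $D'$ and $ND'$.
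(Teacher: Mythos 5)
Your proof is correct and follows essentially the same route as the paper: both write $N\cdot Supp(D)=D+E$ with $E=\sum(N-a_i)D_i$ effective for $N=\max_i a_i$ (the paper's $a$), and both conclude via characterization 2) of Proposition \ref{big} that adding an effective divisor preserves bigness. Your version merely spells out the ``big plus effective is big'' step and the final descent from $ND'$ to $D'$ in slightly more detail.
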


\begin{proof}
Let $a=max_ {i=1,...,r} \{ a_i\}$ and $b_i=a-a_i.$ The divisor
$E=\sum^r_{i=1} b_i D_i$ is effective. By Proposition \ref{big},
p. 2) the divisor $D+E=a Supp(D)$ is also big. Hence we  conclude
by 3) of Proposition \ref{big}.
\end{proof}

\begin{defi}
Let $X$ be a normal projective variety and let $D$ be a Weil
divisor on $X$. We say that $X$ is very big if the rational map
$\iota_{D}$ associated with the system $|D|$ is a birational
embedding. We say that $D$ is big if for some $m\in \Bbb N$ the
divisor $mD$ is very big.
\end{defi}

We have the following simple lemma:

\begin{lem}\label{big1}
Let $X, Y$ be normal projective varieties and let $D$ be an
effective
 big divisor on $Y$. Let $\phi: X-\to Y$ be a birational mapping such that {\rm codim} $\phi^{-1}(Sing(Y))\ge 2.$
Then the divisor $\phi^*(D)$ is also  big.
\end{lem}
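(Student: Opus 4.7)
The plan is to reduce the bigness of $\phi^*(D)$ to the very bigness of $mD$ for some $m$, by producing a $\Bbb K$-linear injection $L(mD)\hookrightarrow L(\phi^*(mD))$ via $f\mapsto f\circ\phi$, and then factoring the map $\iota_{\phi^*(mD)}$ through the composition $\iota_{mD}\circ\phi$.

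First I would pick $m\in\N$ so that $mD$ is very big (Definition 4.4), giving a birational embedding $\iota_{mD}:Y\to\Bbb P^N(\Bbb K)$, where $N=\dim|mD|$. Since $\phi^*$ is a homomorphism preserving linear equivalence (Proposition 3.7), $\phi^*(mD)=m\phi^*(D)$, so it is enough to show $\phi^*(mD)$ is very big.

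Next I would construct and verify the $\Bbb K$-linear map $L(mD)\to L(\phi^*(mD))$, $f\mapsto f\circ\phi$. Two things need to be checked: (i) pullback of the principal divisor $(f)$ along $\phi$ equals $(f\circ\phi)$, which follows by unwinding Definition 3.6 applied to the Cartier data for $(f)$; and (ii) pullback of an effective Weil divisor by $\phi$ is effective, which one gets by restricting to the regular locus of $Y$ and the domain of $\phi$, applying the standard Cartier statement there, and then extending back via the codimension-two isomorphism of Proposition 3.5. Given these, $mD+(f)\ge 0$ yields $\phi^*(mD)+(f\circ\phi)\ge 0$, so $f\circ\phi\in L(\phi^*(mD))$. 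Injectivity is immediate because $f\mapsto f\circ\phi$ is the field isomorphism $\Bbb K(Y)\to\Bbb K(X)$ coming from the birationality of $\phi$.

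To conclude, I would pick a basis $f_0,\ldots,f_N$ of $L(mD)$ (so that $\iota_{mD}=(f_0:\cdots:f_N)$) and extend $\{f_0\circ\phi,\ldots,f_N\circ\phi\}$ to a basis of $L(\phi^*(mD))$ by adjoining some $g_1,\ldots,g_s$. Then $\iota_{\phi^*(mD)}=(f_0\circ\phi:\cdots:f_N\circ\phi:g_1:\cdots:g_s)$, and the linear projection $\pi:\Bbb P^{N+s}(\Bbb K)\to\Bbb P^N(\Bbb K)$ onto the first $N+1$ coordinates satisfies $\pi\circ\iota_{\phi^*(mD)}=\iota_{mD}\circ\phi$. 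Because $\phi$ is birational and $\iota_{mD}$ is a birational embedding, $\iota_{mD}\circ\phi$ is generically injective; since generic injectivity of $\pi\circ h$ forces generic injectivity of $h$, the map $\iota_{\phi^*(mD)}$ is itself a birational embedding. Thus $\phi^*(mD)$ is very big and $\phi^*(D)$ is big. The main obstacle will be the bookkeeping in the middle step: one has to check, starting only from the codimension-two-regularity hypothesis, that the rational pullback $\phi^*$ sends effective Weil divisors to effective Weil divisors and sends $(f)$ to $(f\circ\phi)$; once these formal properties are available, the factorization argument is essentially one line.
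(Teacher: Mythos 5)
Your proposal is correct and follows essentially the same route as the paper's own proof: reduce to the very big case, pull back a basis of $L(mD)$ via $f\mapsto f\circ\phi$, extend to a basis of $L(\phi^*(mD))$, and observe that a coordinate projection carries $\iota_{\phi^*(mD)}$ onto the birational map $\iota_{mD}\circ\phi$, forcing $\iota_{\phi^*(mD)}$ to be a birational embedding. The only difference is cosmetic (the paper normalizes $f_0=1$ and phrases the factorization through affine charts), and your extra care in checking that $\phi^*$ sends $(f)$ to $(f\circ\phi)$ and preserves effectivity fills in details the paper leaves implicit.
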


\begin{proof}
It is enough to assume that $D$ is very big and prove that then
$\phi^*(D)$ is also very big. Take $f_0=1$ and let divisors $\{
D+(f_0), D+(f_1),..., D+(f_s)\},$ where $f_i\in \Bbb K(Y),
i=0,..., s$ form a basis of a system $|D|.$ By the assumption the
regular mapping $\Psi: Y\setminus Supp(D)\ni x\mapsto (f_1(x),...,
f_s(x))\in \Bbb K^s$ is a birational morphism. The system
$|\phi^*(D)|$ contains divisors $\{ \phi^*(D), \phi^*(D)+(f_1\circ
\phi),..., \phi^*(D)+(f_s\circ \phi)\}.$ Since rational functions
$1, f_1\circ \phi,..., f_s\circ \phi$ are linearly independent, we
can extend  them to some basis $B$ of $L(\phi^*(D)).$  Let
$\Psi':X\setminus |Supp(\phi^*(D))|\to \Bbb K^N$ be a mapping
given by a system $|\phi^*(D)|$ and the basis $B.$ The mapping
$\Psi'$ after composition with a suitable projection $\Bbb K^N\to
\Bbb K^s$, is equal to $\Psi\circ \phi.$ Since the latter mapping
is birational, the mapping $\Psi'$ is also birational.
\end{proof}

We shall use:

\begin{defi}\label{good}
Let $X$ be an (open) variety.  We say that $X$ has a good cover
$Y$, if there exists a completion $\overline{X}$ of $X$ and a
smooth projective variety $Y$ with a birational morphism $g: Y \to
\overline{X}$ such that:

1) $D:=g^{-1}(\overline{X}\setminus X)$ is a big hypersurface in
$Y$,

2) $Aut(X)\subset Aut(Y\setminus D)$, i.e., every automorphism of
$X$ can be lifted to an automorphism of $Y\setminus D.$
\end{defi}

Our next aim is to show that quasi-affine varieties have good
covers.

\begin{pr}
Let $X$ be a quasi-affine variety.Then $X$ has a good cover.
\end{pr}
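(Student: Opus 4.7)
The plan is to construct a good cover of $X$ in three stages: first, embed $X$ in a projective completion in which part of the boundary comes from a hyperplane section; second, resolve singularities in a controlled way so that the entire boundary becomes a simple normal crossings divisor; third, use the functoriality of canonical resolution to lift automorphisms of $X$ to automorphisms of the smooth open part.

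Since $X$ is quasi-affine, it embeds as an open subvariety of some affine variety $A$; choose a closed embedding $A\hookrightarrow \Bbb K^N$ and let $\overline{X}:=\overline{A}\subset\Bbb P^N(\Bbb K)$ be the projective closure. Then $\overline{X}$ is a projective completion of $X$, and the trace of the hyperplane at infinity cuts out on $\overline{X}$ an ample effective divisor $H$ with support $H_\infty=\overline{X}\setminus A$. The remainder of the boundary $\overline{X}\setminus X$ consists of $H_\infty$ together with the closed subset $A\setminus X\subset A$, which in general need not be pure of codimension one --- this is why the next step is needed.

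Next, apply a canonical (Hironaka / Bierstone--Milman) resolution of singularities to obtain a birational morphism $g\colon Y\to \overline{X}$ with $Y$ smooth projective and $D:=g^{-1}(\overline{X}\setminus X)$ a simple normal crossings divisor. The construction should be arranged in two passes so that $g|_{g^{-1}(X)}\colon g^{-1}(X)\to X$ is itself the canonical resolution of $X$: first resolve $X$ canonically to obtain $\tilde X\to X$ with $\tilde X$ smooth, then extend this to a model of $\overline{X}$ and canonically resolve only the part lying over $\overline{X}\setminus X$. To see that $D$ is big, note that $g^*H$ is a big Cartier divisor on $Y$ (sections pull back injectively under the birational morphism $g$, so bigness of the ample $H$ is preserved; this is the content of Lemma \ref{big1} for $H$ effective ample), and $Supp(g^*H)\subseteq D$. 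By Lemma \ref{big}, $Supp(g^*H)$ is a big reduced divisor, and $D$ differs from it by an effective divisor, so $D$ is itself big.

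It remains to verify condition 2) of Definition \ref{good}. Because $Y\setminus D=g^{-1}(X)$ equals the canonical resolution of $X$ by construction, every $\phi\in Aut(X)$ lifts uniquely to an automorphism $\tilde\phi\in Aut(Y\setminus D)$ satisfying $g\circ\tilde\phi=\phi\circ g$, and the resulting assignment $\phi\mapsto\tilde\phi$ is an injective group homomorphism. The main obstacle in the whole argument is this lifting: a general element of $Aut(X)$ extends only as a birational self-map of $\overline{X}$, not as a regular automorphism, so one cannot simply appeal to functoriality of a resolution of $\overline{X}$. Arranging that $g^{-1}(X)\to X$ agrees with the canonical resolution of $X$ itself is precisely what permits automorphisms of $X$, which a priori act only on $X$, to be lifted automatically to $Y\setminus D$.
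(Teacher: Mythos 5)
Your proposal is correct and follows essentially the same route as the paper: projective closure with the (very ample, hence big) divisor at infinity, a canonical resolution whose restriction over $X$ is the canonical resolution of $X$ itself (so that automorphisms of $X$ lift functorially to the smooth open part), and bigness of the boundary obtained from Lemma \ref{big} by comparing $D$ with the support of the pulled-back ample divisor. The only difference is one of precision: the paper realizes your ``second pass'' concretely as a canonical principalization of the ideal sheaf of $Z=Y\setminus h^{-1}(X)$ on the already-resolved model, which turns the boundary into a hypersurface while leaving $h^{-1}(X)$ untouched, rather than first resolving $X$ and then trying to extend that resolution to a model of $\overline{X}$.
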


\begin{proof}
By the assumption there is an affine variety $X_1$ such that
$X\subset X_1$ is an open dense subset. Since  $X_1$ is affine, we
can assume  that it is a closed subvariety of  some $\Bbb K^N.$
Denote by $\overline{X}$ the projective closure of $X_1$ in $\Bbb
P^N.$ Let $\pi_\infty$ be the hyperplane at infinity in $\Bbb P^N$
and $V:=\overline{X}.\pi_\infty$ be a divisor at infinity on
$\overline{X}.$ Of course $V$ is a big (even very ample)  Cartier
divisor.

Let $ h: Y \to \overline{X}$ be a canonical desingularization of
$\overline{X}$ ( see e.g., \cite{kol}, \cite{wlo}). Then
$h_{|h{-1}(X)}: h^{-1}(X)\to X$ is a canonical desingularization
of $X.$ In particular every automorphism of $X$ has a lift to an
automorphism of $h^{-1}(X),$ i.e., $Aut(X)\subset
Aut(h^{-1}(X))=Aut(Y\setminus h^{-1}(S)).$ Since $V$ is a big
divisor, so is its pullback $h^*(V).$

Let $Z:=Y\setminus h^{-1}(X).$ It is a closed subvariety of $Y$.
Let $J_Z$ be the ideal sheaf of $Z$ and let $f: Y'\to Y$ be a
canonical principalization of $J_Z$ ( see e.g., \cite{kol},
\cite{wlo}). Thus $D:=f^{-1}(Z)$ is a hypersurface, which contains
a big hypersurface $V'=Supp(f^*h^*(V)).$ Since $D=V'+E$, where $E$
is effective divisor, the hypersurface $D$ is also big by Lemma
\ref{big}.

Finally if we take $g=f\circ h : Y'\to \overline{X}$, then
conditions 1) and 2) of Definition \ref{good} are satisfied.
\end{proof}

\section{The Quasi Minimal Model}

In this section following \cite{p-s}, we introduce the notion of
quasi-minimal models (for details see \cite{p-s}). This is a
weaker analog of a usual notion of minimal models which has an
advantage that to prove its existence we do not need the full
strength of the Minimal Model Program.

\begin{defi} (see \cite{p-s}) An effective $\Bbb Q$-divisor $M$
on a variety $X$ is said to be $Q$-movable if for some $n > 0$ the
divisor $nM$ is integral and generates a linear system without
fixed components.  Let X be a projective variety with $\Bbb
Q$-factorial terminal singularities. We say that X is a
quasi-minimal model if there exists a sequence of $\Bbb Q$-movable
$\Bbb Q$-divisors $Mj$ whose limit in the Neron-Severi space
$NSW_{\Bbb Q} (X) = NSW(X) \otimes \Bbb Q$ is $K_X.$
\end{defi}

By the recent progress in the minimal model program ( see
\cite{bir}, \cite{bir1}, \cite{p-s}) every non-uniruled smooth
variety has a quasi-minimal model. In fact if we ran MMP on $X$
and we do all possible divisorial contractions (and all necessary
flips) we achieve a quasi-minimal model $Y$, moreover we obtain
the mapping $\phi: X-\to Y$ which is a composition of divisorial
contractions ans flips, in particular it is a birational
contraction, i.e., the mapping $\phi^{-1}$ does not contract any
divisor (cf. \cite{p-s}, section 4, Corollary 4.5):

\begin{theo}\label{min}
Let $X$ be a smooth projective non-uniruled variety. Then there is
a quasi-minimal model $Y$ and a birational contraction $\phi: X
-\to Y.$$\square$
\end{theo}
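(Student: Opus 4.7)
The plan is to invoke the Minimal Model Program with scaling and show that the output satisfies the two required properties.

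First, I would observe that since $X$ is non-uniruled, the canonical divisor $K_X$ is pseudo-effective. (This is the theorem of Boucksom--Demailly--P\u{a}un--Peternell: a smooth projective variety is uniruled if and only if $K_X$ is not pseudo-effective.) This pseudo-effectivity is the only reason we expect MMP to terminate in a minimal-type model rather than in a Mori fiber space.

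Next, I would run the MMP with scaling of an ample divisor $H$ on $X$. At each step we either perform a divisorial contraction or a flip; the existence of the required flips is guaranteed by \cite{bir1} (BCHM). Since $K_X$ is pseudo-effective, the program never terminates with a Mori fiber space, so it only produces birational modifications. The process yields a sequence
\[
X = X_0 \dashrightarrow X_1 \dashrightarrow \cdots \dashrightarrow X_N = Y
\]
where each $X_i$ is $\Bbb Q$-factorial with terminal singularities (these properties are preserved by divisorial contractions and flips starting from a smooth $X$), and where the composition $\phi : X \dashrightarrow Y$ is a birational contraction: divisorial contractions contract divisors in the forward direction, and flips are isomorphisms in codimension one, so $\phi^{-1}$ never contracts a divisor.

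To see that the resulting $Y$ is in fact a quasi-minimal model in the sense of Prokhorov--Shokurov, I would use the construction of MMP with scaling: at step $i$ one finds a threshold $t_i \ge 0$ such that $K_{X_i} + t_i H_i$ is nef (here $H_i$ is the strict transform of $H$), and the thresholds $t_i$ form a non-increasing sequence. The limit $t_\infty = \lim t_i$ exists, and on $Y$ the $\Bbb Q$-divisors $M_j := K_Y + t_{i_j} H_Y$ are nef, hence movable (their multiples have base loci in codimension $\ge 2$ after passing to suitable multiples, by base-point-freeness for nef divisors with pseudo-effective canonical class, or directly by the arguments of \cite{p-s}). Since $M_j \to K_Y$ in $NSW_{\Bbb Q}(Y)$, this exhibits $K_Y$ as a limit of $\Bbb Q$-movable divisors, giving the quasi-minimal model property.

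The genuinely hard input is the existence of flips and termination with scaling, both of which come from \cite{bir}, \cite{bir1}; the whole point of the Prokhorov--Shokurov notion is that one does not need termination of arbitrary MMP sequences. Modulo citing those results (as the statement does), the remaining work is purely bookkeeping: checking that divisorial contractions and flips preserve $\Bbb Q$-factoriality and terminal singularities, and checking that a composition of divisorial contractions and flips is a birational contraction. As the author notes, this is exactly Corollary 4.5 of \cite{p-s}.
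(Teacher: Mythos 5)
Your proposal follows essentially the same route as the paper, which offers no independent argument and simply cites \cite{p-s}, Corollary 4.5, together with the same sketch (run the MMP using \cite{bir}, \cite{bir1}, note that pseudo-effectivity of $K_X$ via Boucksom--Demailly--P\u{a}un--Peternell rules out a Mori fiber space, and observe that a composition of divisorial contractions and flips is a birational contraction). The only step stated loosely is ``nef hence movable,'' which requires bigness of $K_{X_i}+t_iH_i$ and the basepoint-free theorem rather than nefness alone, but you correctly defer that to the arguments of \cite{p-s}.
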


Quasi minimal models have the following very important property
(cf. \cite{p-s}, section 4, Proposition 4.6):

\begin{theo}\label{min}
Let $X$ be a quasi-minimal model. Then $Bir(X)=Aut_1(X).$$\square$
\end{theo}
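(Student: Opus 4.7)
The plan is to prove the nontrivial inclusion $\mathrm{Bir}(X)\subseteq \mathrm{Aut}_1(X)$ by contradiction. Suppose some $f\in \mathrm{Bir}(X)$ contracts a prime divisor $D\subset X$. I would first choose a common resolution $p,q\colon W\to X$ with $W$ smooth projective and $q=f\circ p$ as rational maps. The strict transform $E_0:=p^{-1}_*(D)$ is then a prime divisor on $W$ which is non-exceptional for $p$ (since $p(E_0)=D$ has codimension one) but exceptional for $q$ (since $q(E_0)=f(D)$ has codimension at least two). The goal is to produce a numerical sign obstruction that prevents such an $E_0$ from existing.

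Next I would exploit the terminality of $X$ via discrepancies. Writing the canonical divisor of $W$ in two ways,
\[
K_W = p^*K_X + \sum_i a_i E_i = q^*K_X + \sum_j b_j F_j,
\]
with $E_i$ (resp.~$F_j$) running over $p$- (resp.~$q$-)exceptional prime divisors and all coefficients $a_i, b_j > 0$ by terminality, subtraction yields
\[
q^*K_X - p^*K_X \;=\; \sum_i a_i E_i \;-\; \sum_j b_j F_j
\]
as an identity of $\mathbb{Q}$-divisors on $W$. Since $E_0$ appears on the right among the $F_j$'s with strictly positive coefficient $b_{j_0}$ and does not appear among the $E_i$'s, the coefficient of $E_0$ in $q^*K_X - p^*K_X$ is $-b_{j_0}<0$.

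Finally I would use the quasi-minimality hypothesis. Pick movable $\mathbb{Q}$-divisors $M_k$ on $X$ with $M_k\to K_X$ in $\mathrm{NS}_{\mathbb{Q}}(X)$, and fix general effective representatives. For each $k$ the divisors $p^*M_k$ and $q^*M_k$ are effective on $W$; moreover, at any non-$p$-exceptional prime divisor $G$ on $W$, the coefficient of $G$ in $p^*M_k$ equals $\mathrm{mult}_{p(G)}(M_k)=0$, since the base locus of a movable linear system has codimension $\geq 2$ and a general representative has no prime divisor as fixed component. Hence the negative part of $q^*M_k - p^*M_k$ is supported on $p$-exceptional divisors. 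Passing to the limit $k\to\infty$, and combining with a continuity/closedness property for this negative-part support (Nakayama's $\sigma$-decomposition, or a direct invocation of the Negativity Lemma), the negative part of $q^*K_X - p^*K_X$ is likewise supported on $p$-exceptional divisors -- contradicting the presence of the non-$p$-exceptional $E_0$ in that negative part with coefficient $-b_{j_0}<0$.

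The hardest step will be the limiting argument: coefficient-wise control is immediate for the movable $M_k$'s, but the limit is taken in $\mathrm{NS}_{\mathbb{Q}}$, where coefficient-by-coefficient convergence of divisor representatives is not automatic. Bridging that gap requires either the Nakayama--Zariski decomposition (with its continuity on the pseudo-effective cone) or an appropriate relative Negativity Lemma. It is precisely here that the recent MMP input from \cite{bir}, \cite{bir1} and \cite{p-s} is essential, replacing the classical argument available when $K_X$ itself is nef by an approximation through movable divisors.
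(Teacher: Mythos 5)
The paper does not actually prove this statement: it is quoted from \cite{p-s} (Proposition 4.6) and marked with a box, so your attempt has to be judged on its own terms against the argument there. Your overall strategy --- reduce to showing that no prime divisor is contracted, encode the obstruction as a strictly negative coefficient of the non-$p$-exceptional divisor $E_0$ in $q^*K_X-p^*K_X$ via terminal discrepancies, and then rule this out using the movable approximants of $K_X$ --- is indeed the right one and is essentially the strategy of the cited proof. But two steps do not survive scrutiny. First, the assertion that for \emph{every} non-$p$-exceptional prime divisor $G$ on $W$ one has $\mathrm{mult}_{p(G)}(M_k)=0$ is false: a general member of a nontrivial movable linear system is still a nonzero effective divisor and has positive multiplicity along each of its own components, so the negative part of $q^*M_k-p^*M_k$ is \emph{not} supported on $p$-exceptional divisors. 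What genericity actually gives is much weaker: for the single, fixed divisor $E_0$, a general member $N_k$ of $|n_kM_k|$ does not contain the prime divisor $p(E_0)$, whence $\mathrm{coeff}_{E_0}(q^*N_k-p^*N_k)\ge 0$. That is the statement you should use, and it must be arranged for each $k$ \emph{after} $E_0$ has been fixed.

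Second, and more seriously, the limiting step is not a detail you can defer: it is where the entire content of the theorem sits, and the naive version is unavailable in principle. The convergence $[M_k]\to[K_X]$ takes place in $\mathrm{NS}_{\mathbb Q}(X)$, but the quantity $\mathrm{coeff}_{E_0}(q^*L-p^*L)$ is not a function of the numerical (or even linear equivalence) class of $L$: replacing $L$ by $L+(g)$ changes $q^*L-p^*L$ by $\mathrm{div}\bigl((g\circ q)/(g\circ p)\bigr)\neq 0$ because $p\neq q$. So coefficientwise passage to the limit is meaningless, not merely hard. The standard repair is to replace the coefficient by Nakayama's $\sigma_{E_0}$, which \emph{is} a lower semicontinuous function of the numerical class, and to prove two inequalities for $K_W$: from the $p$-side, $\sigma_{E_0}(K_W)\le\sigma_{E_0}(p^*K_X)=\sigma_{p(E_0)}(K_X)=0$, because $E_0$ is not $p$-exceptional and $K_X$ is a limit of movable classes; from the $q$-side, writing $K_W=q^*K_X+\sum_j b_jF_j$ with $E_0=F_{j_0}$, a Negativity--Lemma estimate for an effective exceptional divisor added to a pullback gives $\sigma_{E_0}(K_W)\ge b_{j_0}>0$, the desired contradiction. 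Until you state and prove (or precisely cite) these three facts --- the pullback formula for $\sigma$ along non-exceptional divisors, its lower semicontinuity, and the exceptional lower bound --- your argument has a genuine gap exactly at its decisive step.
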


\section{Main Result.}

Now we can start our proof. The first step is

\begin{pr}\label{finite}
Let $X$ be a normal  complete non-uniruled variety and $H$ be a
big hypersurface on $X$. Then the group $Stab_X(H)=\{ f\in
Aut_{1}(X): f^*H=H \}$ is finite.
\end{pr}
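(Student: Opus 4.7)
My plan is to reduce to the setting of Corollary \ref{natural} and then exploit the fact that its target is an affine algebraic group. Since $H$ is big on the normal complete variety $X$, by the definition of bigness for Weil divisors there is $m\ge 1$ such that $mH$ is very big, so the canonical map $i_{mH}:X -\to X':=\overline{i_{mH}(X)}\subset \Bbb P^n(\Bbb K)$ is a birational embedding. Writing $G:=Stab_X(H)$, the equality $f^*H=H$ for every $f\in G$ gives $f^*(mH)=mH$, so Corollary \ref{natural} produces an injective homomorphism
\[
T:G\hookrightarrow Lin(X').
\]
Recall that $Lin(X')$ is an affine algebraic subgroup of $Aut(\Bbb P^n(\Bbb K))$ acting faithfully on $X'$ (since $X'$, being defined by a basis of $L(mH)$, is non-degenerate in $\Bbb P^n(\Bbb K)$).

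Assume for contradiction that $G$ is infinite. Then $T(G)\subset Lin(X')$ is infinite, so its Zariski closure $\Gamma$ is an algebraic subgroup whose identity component $\Gamma^{\circ}$ is a connected positive-dimensional linear algebraic group. By the faithfulness above, $\Gamma^{\circ}$ acts non-trivially on $X'$. In characteristic zero a connected linear algebraic group is topologically generated by its one-parameter subgroups isomorphic to $\Bbb G_a$ or $\Bbb G_m$, so at least one such subgroup $L\subset\Gamma^{\circ}$ acts non-trivially on $X'$.

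The generic $L$-orbit in $X'$ is then a positive-dimensional image of a non-constant morphism from $\Bbb A^1$ or $\Bbb G_m$, and is in particular a rational curve. Consequently the generic point of $X'$ lies on a rational curve, i.e., $X'$ is uniruled. Since $X'$ is birational to $X$ and uniruledness is a birational invariant, $X$ itself is uniruled, contradicting the hypothesis; hence $G$ is finite.

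The main obstacle is the step from ``$\Gamma^{\circ}$ acts non-trivially'' to ``some $\Bbb G_a$- or $\Bbb G_m$-subgroup acts non-trivially'': this is where the characteristic zero hypothesis enters, through the fact that a connected linear algebraic group is topologically generated by its one-parameter subgroups, so if every such subgroup acted trivially on $X'$ then $\Gamma^{\circ}$ itself would. Everything else is a routine combination of Corollary \ref{natural}, applied after replacing $H$ by a sufficiently divisible multiple to make it very big, together with the birational invariance of uniruledness.
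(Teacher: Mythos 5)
Your proof is correct, and up to the last step it is exactly the paper's argument: replace $H$ by a very big multiple $mH$, note $Stab_X(H)=Stab_X(mH)$, and use Corollary \ref{natural} to embed the stabilizer into the affine group $Lin(X')$, where $X'=\overline{i_{mH}(X)}$ is birational to $X$. The only divergence is at the end: the paper simply cites Rosenlicht's theorem to conclude that an infinite $Lin(X')$ would make the non-uniruled variety $X'$ ruled, whereas you unpack that citation into a self-contained argument --- take the Zariski closure $\Gamma$ of the image (legitimate, since $Lin(X')$ is a closed subgroup of $Aut(\mathbb{P}^n(\mathbb{K}))$ by non-degeneracy of $X'$), extract a one-parameter subgroup $\mathbb{G}_a$ or $\mathbb{G}_m$ from the positive-dimensional connected group $\Gamma^{\circ}$, and observe that its generic orbits are rational curves sweeping out a dense subset of $X'$. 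What this buys is independence from the Rosenlicht reference at essentially no extra cost; note also that your appeal to topological generation by one-parameter subgroups is more than is needed, since faithfulness of the $Lin(X')$-action means that \emph{any} nontrivial $\mathbb{G}_a$ or $\mathbb{G}_m$ inside $\Gamma^{\circ}$ (which exists via a Borel subgroup) automatically acts nontrivially.
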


\begin{proof}
For some $m\in \Bbb N$ the divisor $mD$ is very big. We have
$Stab_X(H)=\{ f\in Aut_{1}(X): f^*H=H \}=Stab_X(mH)=\{ f\in
Aut_{1}(X): f^*(mH)=mH \}.$

By the assumption the variety $X'=\overline{i_{mH}(X)}$ is
birationally equivalent to $X$. In view of Corollary \ref{natural}
it is enough to prove that the group $Lin(X')$ is finite.  Since
$X$ is non-uniruled, the $X'$ is non-uniruled, too. But the group
$Lin(X')$ is an affine group and if it is infinite, then  by
Rosenlicht Theorem (see \cite{ros}) we  have that $X'$ is ruled -
which is impossible.
\end{proof}

Now we can prove our main result:

\begin{theo}
Let $X$ be an open variety with a good cover.  If the group
$Aut(X)$ is infinite, then $X$ is uniruled.
\end{theo}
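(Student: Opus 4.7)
The plan is to argue by contrapositive: assume $X$ is not uniruled and deduce that $Aut(X)$ is finite. Let $g:Y\to\overline{X}$ be the given good cover, so $D:=g^{-1}(\overline{X}\setminus X)$ is a big hypersurface on the smooth projective variety $Y$, and every $f\in Aut(X)$ lifts to $\tilde f\in Aut(Y\setminus D)\subset Bir(Y)$. Since $X$ is birational to the dense open subset $Y\setminus D\subset Y$, non-uniruledness of $X$ gives non-uniruledness of $Y$. By Theorem \ref{min} (existence of a quasi-minimal model) there is a $\Bbb Q$-factorial terminal projective variety $Y'$ and a birational contraction $\phi:Y-\to Y'$, and by the companion Theorem \ref{min} we have $Bir(Y')=Aut_1(Y')$.

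Let $D'\subset Y'$ be the reduced Weil divisor obtained as the strict transform of $D$ under $\phi$, i.e.\ the sum of the proper transforms of those components of $D$ that are not contracted by $\phi$. First I check that $D'$ is big: since $Y$ is smooth, the hypothesis on $Sing(Y)$ in Lemma \ref{big1} is vacuous for $\phi^{-1}:Y'-\to Y$, so $(\phi^{-1})^*D$ is big on $Y'$. Its support is exactly $D'$, and the argument of the ``support of a big divisor is big'' lemma of \S 4 (which uses only that big plus effective is big) goes through on the normal $\Bbb Q$-factorial $Y'$, giving bigness of $D'$. Next, the assignment $f\mapsto g_f:=\phi\circ\tilde f\circ\phi^{-1}$ is an injective group homomorphism $Aut(X)\hookrightarrow Bir(Y')=Aut_1(Y')$.

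The main obstacle is to show $g_f^*D'=D'$ as reduced divisors, so that each $g_f$ lies in $Stab_{Y'}(D')$. Because $\phi$ is a birational contraction, $\phi^{-1}$ contracts no divisor; hence there exist open subsets $Y_0\subset Y$ and $Y'_0\subset Y'$ with complements of codimension at least two on which $\phi$ restricts to a regular isomorphism $Y_0\to Y'_0$, and under this isomorphism $D\cap Y_0$ corresponds to $D'\cap Y'_0$ (the $\phi$-contracted components of $D$ lie in the codimension-$\ge 2$ locus $Y\setminus Y_0$). Under this codimension-one identification the action of $g_f$ on a codimension-one-open subset of $Y'\setminus D'$ coincides with that of $\tilde f$ on $Y\setminus D$, which preserves $Y\setminus D$; therefore $g_f$ sends a codimension-one-open subset of $Y'\setminus D'$ into itself. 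Running the same argument for $g_f^{-1}$ together with $g_f\in Aut_1(Y')$ forces $g_f^*D'=D'$. The delicate point is that $\tilde f$ need not permute the components of $D$ on $Y$, since it may contract some of them to codimension-$\ge 2$ subvarieties; passing to the quasi-minimal model, where $\phi$ precisely discards these problematic components, is what makes the action on $D'$ well-behaved. Finally, Proposition \ref{finite} applied to the normal projective non-uniruled variety $Y'$ and the big hypersurface $D'$ shows that $Stab_{Y'}(D')$ is finite, so $Aut(X)$ is finite — a contradiction.
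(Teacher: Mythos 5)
Your argument is essentially the paper's own proof: pass to a quasi-minimal model $Y'$ so that the induced birational maps land in $Bir(Y')=Aut_1(Y')$, transport $D$ to the big reduced divisor $D'=(\phi^{-1})^{*}D$ (which coincides with your strict transform), verify via an identification in codimension one that each induced map stabilizes $D'$, and conclude with Proposition \ref{finite}. The one flaw is a local misstatement: since $\phi$ may contract divisors (possibly components of $D$), the set $Y\setminus Y_0$ need \emph{not} have codimension $\ge 2$ --- your own parenthetical places the contracted components of $D$, which are divisors, inside it --- but only the bound $\mathrm{codim}(Y'\setminus Y'_0)\ge 2$ is actually used in the rest of your argument, so the proof survives.
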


\begin{proof}
Assume that $Aut(X)$ is infinite.  Let $f: \overline{Y}\to
\overline{X}$ be a good cover of $X$ and take $Y=f^{-1}(X).$ Then
$Aut(Y)$ is also infinite. We have to prove that $X$ is uniruled.
To do this it suffices  to prove that $Y$ is uniruled.

Assume that $Y$ is not uniruled. By Theorem \ref{min} there exists
a quasi-minimal model $Z$ and a birational contraction $\phi:
\overline{Y}-\to Z.$ Take $\psi= \phi^{-1}.$ The mapping $\psi$ is
a regular mapping outside some closed subset $F$ of codimension
$\ge 2.$ By the Zariski Main Theorem the mapping $\psi$ restricted
to $Z\setminus F$ is an embedding.

Take a mapping $G\in Aut(Y),$ in fact $G\in Bir(\overline{Y}).$
The mapping $G$ induces a birational mapping $g\in Bir(Z).$ Since
$Bir(Z)=Aut_1(Z)$ we have $g\in Aut_1(Z).$ The mapping $g$ is a
morphisms outside a closed subset $R$ of codimension $\ge 2$,
moreover  sets $\overline{g^{-1}(R)}=R_1$ and
$\overline{g^{-1}(F)}=F_1$ have also codimension at least two ($g$
does not contract divisors).

The mapping $\psi$ embeds the set $V:=Z\setminus (F\cup F_1\cup
R\cup R_1)$ into $Y$, denote $U:= \psi(Z\setminus (F\cup F_1\cup
R\cup R_1)).$ Under this identification the mapping $g$ restricted
to $V$ corresponds to the mapping $G$ restricted to $U.$

Let $D=\overline{Y}\setminus Y$ be a big hypersurface, as in the
definition of a good cover. The hypersurface $D':=\psi^*(D)$ is
also big ( see Lemma \ref{big1}) and $D'\cap V$ corresponds to
$D\cap U.$ Since $G(U\setminus D)=U\setminus D$ we have that $g$
transforms irreducible components of $D'\cap V$ onto irreducible
components of $D'\cap V.$ In particular $g^*(D')=D'.$ This means
that $Aut(Y) \subset Stab_{D'}(Z)\subset Aut_1(Z).$ By Proposition
\ref{finite} this contradicts our assumption.
\end{proof}

\begin{cor}
Let $X$ be a quasi-affine (in particular affine) variety.  If the
group $Aut(X)$ is infinite, then $X$ is uniruled.
\end{cor}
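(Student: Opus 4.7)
The plan is essentially to chain together two results already established in the paper, so that the corollary follows by a one-step deduction. First I would invoke the Proposition in Section 4 which asserts that every quasi-affine variety $X$ admits a good cover in the sense of Definition 4.3. The substantive content sits there: one embeds $X$ as an open subset of an affine variety $X_1\subset \Bbb K^N$, takes the projective closure $\overline{X}\subset \Bbb P^N$, intersects with the hyperplane at infinity to produce a very ample (hence big) Cartier divisor $V$ on $\overline{X}$, applies a canonical desingularization $h:Y\to \overline{X}$ (so that $Aut(X)$ lifts to $Aut(h^{-1}(X))$ and $h^*V$ remains big), and finally applies a canonical principalization of the ideal sheaf of the boundary to convert the complement into a hypersurface $D$ containing a big sub-hypersurface; Lemma 4.4 then guarantees that $D$ itself is big.

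With the good cover in hand, the Theorem in Section 6 applies verbatim: an open variety that possesses a good cover and an infinite automorphism group must be uniruled. Thus, assuming $Aut(X)$ is infinite, one concludes that $X$ is uniruled. The "in particular affine" clause requires no extra argument, since an affine variety is quasi-affine in itself.

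There is essentially no obstacle at the level of the corollary — it is a pure repackaging of the main theorem via the existence statement for good covers. If I were to trace the hard part further upstream, it would be the bigness preservation across the birational modifications used to construct the good cover (Lemmas 4.2 and 4.4 together with Lemma 4.5), since it is precisely the bigness of the boundary hypersurface that feeds into Proposition 6.1 and lets one apply Rosenlicht's theorem to $Lin(X')$ to conclude finiteness of the stabilizer. But all of this is done before the corollary is stated, so the corollary itself requires only the citation of the Proposition producing the good cover followed by an application of the Theorem.
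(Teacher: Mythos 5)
Your proposal is correct and matches the paper's intended argument exactly: the corollary is an immediate consequence of the Proposition in Section 4 (every quasi-affine variety has a good cover) combined with the main Theorem of Section 6, and the paper itself gives no further proof. Your additional tracing of where the real work lies (bigness of the boundary hypersurface feeding into Proposition \ref{finite}) is accurate but not needed for the corollary itself.
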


\end{document}